\newcommand{\SL}{{\mathcal{L}}}
\newcommand{\SP}{{\mathcal{P}}}
\newcommand{\SM}{{\mathcal{M}}}
\newcommand{\SO}{{\mathcal{O}}}
\newcommand{\HH}{\mathbb{H}}
\newcommand{\CC}{\mathbb{C}}
\newcommand{\Hom}{\operatorname{Hom}}
\newcommand{\SHom}{{\mathcal{H}om}}
\newcommand{\SEnd}{{\mathcal{E}nd}}
\newcommand{\Ext}{\operatorname{Ext}}
\newcommand{\Spec}{\operatorname{Spec}}
\newcommand{\Pic}{\operatorname{Pic}}
\newcommand{\too}{\longrightarrow}
\newcommand{\rk}{\operatorname{rk}}
\newcommand{\id}{\operatorname{id}}
\newtheorem{proposition}{Proposition}[section]
\newtheorem{theorem}[proposition]{Theorem}
\newtheorem{lemma}[proposition]{Lemma}
\newtheorem{definition}[proposition]{Definition}
\theoremstyle{definition}
\newtheorem{remark}[proposition]{Remark}
\numberwithin{equation}{section}
\begin{document}

\baselineskip=15pt

\title[Comparison of Poisson structures on moduli spaces]{Comparison of Poisson
structures on moduli spaces}

\author[I. Biswas]{Indranil Biswas}

\address{School of Mathematics, Tata Institute of Fundamental
Research, Homi Bhabha Road, Mumbai 400005, India}

\email{indranil@math.tifr.res.in}

\author[F. Bottacin]{Francesco Bottacin}

\address{Dipartimento di Matematica, Universit\`a degli Studi di Padova,
Via Trieste 63, 35121 Padova, Italy}

\email{bottacin@math.unipd.it}

\author[T. L. G\'omez]{Tom\'as L. G\'omez}

\address{Instituto de Ciencias Matem\'aticas (CSIC-UAM-UC3M-UCM),
Nicol\'as Cabrera 15, Campus Cantoblanco UAM, 28049 Madrid, Spain}

\email{tomas.gomez@icmat.es}

\subjclass[2010]{53D17, 53D30, 14H60, 14J60}

\keywords{Spectral data, Hitchin pair, Poisson structure, hypercohomology}

\date{}

\begin{abstract}
Let $X$ be a complex irreducible smooth projective curve,
and let ${\mathbb L}$ be
an algebraic line bundle on $X$ with a nonzero section $\sigma_0$.
Let $\SM$ denote the moduli space of stable Hitchin pairs
$(E,\, \theta)$, where $E$ is an algebraic vector bundle on $X$ of fixed rank $r$ and degree $\delta$,
and $\theta\, \in\, H^0(X,\, \SEnd(E)\otimes K_X\otimes{\mathbb L})$. Associating
to every stable Hitchin pair its spectral data, an isomorphism of $\SM$ with a moduli space $\SP$ of
stable sheaves of pure dimension one on the total space of $K_X\otimes{\mathbb L}$
is obtained. Both the moduli spaces $\SP$
and $\SM$ are equipped with algebraic Poisson structures, which are constructed
using $\sigma_0$. Here we prove that the above isomorphism
between $\SP$ and $\SM$ preserves the Poisson structures.
\end{abstract}

\maketitle

\section{Introduction}

Let $X$ be a complex irreducible smooth projective curve of genus $g$. Take an
algebraic line bundle $N$ on $X$ such that $N\otimes K^{-1}_X$ admits a nonzero section,
where $K_X$ is the canonical bundle of $X$; fix a section $\sigma_0\,\in\, H^0(X,\, N\otimes K^{-1}_X)
\setminus\{0\}$. Fix integers $r$ and $\delta$. Let
$\SM$ denote the moduli space of stable pairs 
of the form $(E,\,\theta)$, where $E$ is an
algebraic vector bundle on $X$ of rank $r$ and degree $\delta$, and $\theta\, \in\, H^0(X,\, \SEnd(E)
\otimes N)$. These are called Hitchin pairs; when $N\,=\, K_X$,
they are called Higgs bundles. The moduli space
$\SM$ is nonempty if one of the following four assumptions hold
(cf.~\cite[Remark 3.4]{Ma}):
\begin{enumerate}
\item the genus $g$ of $X$ is at least $2$;

\item $g \,=\, 1$, $N \,=\, K_X$ and $\gcd(r,\,\delta) \,= \,1$;

\item $g \,= \,1$ and $\deg(N \otimes K_X^{-1}) \,>\, 0$; and

\item $g = 0$ and $\deg(N \otimes K_X^{-1}) \,\ge\, 3$.
\end{enumerate}
We assume that one of these four conditions hold.
This moduli space $\SM$ has
a natural algebraic Poisson structure \cite{Bo1}, \cite{Ma}, which is constructed using
$\sigma_0$.

In the special case where $N\,=\, K_X$ and $\sigma_0$ is the constant function $1$,
this $\SM$ is a moduli space of Higgs bundles. In that case, the Poisson structure is nondegenerate,
meaning it is a symplectic structure; this symplectic structure was constructed earlier
in \cite{Hi1}, \cite{Hi2}. Furthermore, there is a natural algebraic $1$--form on $\SM$
such that the symplectic form is the exterior derivative of it.

Let $S$ denote the smooth complex quasi-projective surface defined by the total space of the line bundle 
$N$. Given any Hitchin pair $(E,\,\theta)\,\in\, \SM$, there
is a subscheme $Y_{(E,\theta)} \, \subset\, S$ of dimension one associated to it. Furthermore,
associated to the pair $(E,\,\theta)$ there is a coherent sheaf ${\SL}_{(E,\theta)}$
on $Y_{(E,\theta)}$ which is pure of dimension one.
This pair $(Y_{(E,\theta)},\, {\SL}_{(E,\theta)})$ is known as the spectral datum
associated to $(E,\,\theta)$. This construction produces an algebraic map
\begin{equation}\label{i1}
\Phi\, :\, \SM\, \longrightarrow\, \SP\, ,
\end{equation}
where $\SP$ is a moduli space of stable sheaves of pure dimension one on $S$. This construction is
reversible, and $\Phi$ is in fact an isomorphism (see \cite{Hi1}, \cite{Hi2}, \cite{Si},
\cite{BNR}).

The earlier mentioned section $\sigma_0$ of $N\otimes K^{-1}_X$ produces a Poisson structure on
the surface $S$. An algebraic Poisson structure on $\SP$ is constructed using this
Poisson structure on $S$ \cite{Bo2}, \cite{Ty}.

It may be mentioned that when $N\,=\, K_X$ and $\sigma_0$
is the constant function $1$, the Poisson structure on $S$ is the canonical
symplectic form on the cotangent bundle $K_X$ of $X$.
In that case the Poisson structure on $\SP$ coincides
with the symplectic structure on $\SP$ constructed by Mukai in \cite{Mu}.
Also, in this case there is a natural algebraic $1$--form on $\SP$
such that the symplectic form is the exterior derivative of it.

In this continuation of the papers \cite{Bo1,Bo2} of the second author, our aim is to prove the
following (see Theorem \ref{thm1}):

\begin{theorem}\label{thm0}
The isomorphism $\Phi$ in \eqref{i1} takes the Poisson structure on $\SM$ to the
Poisson structure on $\SP$.
\end{theorem}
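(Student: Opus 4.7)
The plan is to write both Poisson bivector maps (from the cotangent to the tangent space) in a common cohomological language, and then to compare them using Grothendieck--Serre duality and the projection formula for the natural projection $\pi\colon S\to X$. Fix $(E,\theta)\in\SM$ with corresponding spectral sheaf $\SL\in\SP$ supported on $Y_{(E,\theta)}\subset S$, so that $\pi_*\SL=E$. Set
\[
C^\bullet\,:=\,\bigl[\,\SEnd(E)\xrightarrow{[\cdot,\theta]}\SEnd(E)\otimes N\,\bigr],
\]
placed in degrees $0$ and $1$. The Poisson bivector map on $\SM$ constructed in \cite{Bo1,Ma} is the map $\HH^1(X,(C^\bullet)^\vee\otimes K_X[1])\to\HH^1(X,C^\bullet)$, induced on hypercohomology by the morphism of complexes $(C^\bullet)^\vee\otimes K_X[1]\to C^\bullet$ built from $\sigma_0\colon K_X\to N$. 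The Poisson bivector map on $\SP$ constructed in \cite{Bo2,Ty} is the map $\Ext^1_S(\SL,\SL\otimes K_S)\to\Ext^1_S(\SL,\SL)$, induced by the section of $K_S^{-1}=\pi^*(N\otimes K_X^{-1})$ corresponding to $\sigma_0$.

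The first step is to identify tangent and cotangent spaces at $\SL$ and at $(E,\theta)$. A BNR-style analysis of the derived push-forward of $R\SHom(\SL,\SL)$ along $\pi$---using, for instance, the two-term locally free resolution
\[
0\,\too\,\pi^*E\otimes\pi^*N^{-1}\xrightarrow{z-\pi^*\theta}\pi^*E\,\too\,\SL\,\too\,0
\]
on $S$, where $z$ is the tautological section of $\pi^*N$---yields a quasi-isomorphism $R\pi_*R\SHom(\SL,\SL)\simeq C^\bullet$ in $D^b(X)$, whence $\Ext^1_S(\SL,\SL)\cong\HH^1(X,C^\bullet)$; this realizes the differential $d\Phi$. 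Using $K_S\cong\pi^*(K_X\otimes N^{-1})$ together with the projection formula, the twisted version gives $\Ext^1_S(\SL,\SL\otimes K_S)\cong\HH^1(X,C^\bullet\otimes K_X\otimes N^{-1})$, and the trace pairing on $\SEnd(E)$ combined with the skew-adjointness of $[\cdot,\theta]$ identifies this with $\HH^1(X,(C^\bullet)^\vee\otimes K_X[1])$, matching the two cotangent-space descriptions.

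With the tangent and cotangent spaces identified, the theorem reduces to two compatibilities. First, Grothendieck--Serre duality for $\pi$, with trace coming from $\omega_\pi=\pi^*N^{-1}$, must match the Serre pairings on $S$ and on $X$ used to define the two cotangent spaces. Second, under the quasi-isomorphism $R\pi_*R\SHom(\SL,\SL)\simeq C^\bullet$, multiplication by $\sigma_0$ viewed on $S$ as a section of $K_S^{-1}$ must induce precisely the morphism of complexes $(C^\bullet)^\vee\otimes K_X[1]\to C^\bullet$ used on the $X$-side. The main obstacle is this second compatibility: it requires working with explicit representatives for $R\pi_*R\SHom(\SL,\SL)$ and carefully tracking how multiplication by $\sigma_0$ on the spectral side translates, via the commutator structure on $\SEnd(E)$ arising from the resolution above, into the explicit $\sigma_0$-morphism of complexes on the base. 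Once this final sheaf-and-sign verification is done, the two Poisson bivectors coincide on tangent spaces at every closed point of $\SM$, and hence $\Phi$ is Poisson.
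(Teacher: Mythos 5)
Your overall architecture coincides with the paper's: both arguments use the spectral resolution $0\to p^*(E\otimes N^\vee)\to p^*E\to\SL\to 0$ and the fact that $p\colon S\to X$ is affine to push the $\Ext$-groups on $S$ down to hypercohomology of the two-term complex on $X$, and both reduce the theorem to checking that multiplication by $p^*\sigma_0$ upstairs corresponds to the $\sigma_0$-morphism of complexes downstairs. However, the step you single out as ``the main obstacle'' --- and then leave as a verification still to be ``done'' with explicit representatives and sign-tracking --- is precisely where the content of the theorem sits, and it is not carried out in your write-up. The idea that makes it painless, and which your proposal is missing, is to establish the push-forward isomorphism not just for $W=\SL$ and $W=\SL\otimes K_S$ separately but \emph{functorially in the coherent sheaf} $W$ on $S$: for every $W$ one has
$$
\Ext^1(\SL,\,W)\;\cong\;\HH^1\bigl(E^\vee\otimes p_*W\,\xrightarrow{\;\theta^\vee\otimes 1\,-\,1\otimes p_*x\;}\,E^\vee\otimes N\otimes p_*W\bigr),
$$
naturally in $W$ (Proposition \ref{prop:com}; the naturality comes for free from Lemma \ref{lem:isom} and the exactness of $p_*$). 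Applying this naturality to the single morphism of sheaves $\SL\otimes K_S\to\SL$ given by $s=p^*\sigma_0$, and noting that $p_*(\SL\otimes K_S)=E\otimes K_X\otimes N^\vee$ with the induced map on push-forwards being multiplication by $\sigma_0$, the commutative square comparing the two Poisson maps (diagram \eqref{cd}) drops out with no cocycle or sign computation at all. Without this, or an equivalent explicit check, your argument is an outline rather than a proof.

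A second, smaller gap: you assert that the quasi-isomorphism $Rp_*R\SHom(\SL,\SL)\simeq C^\bullet$ ``realizes the differential $d\Phi$'', and that the twisted identification matches its dual. These claims are true but require proof: one must trace an infinitesimal deformation $\SL_\epsilon$ through the connecting homomorphism $\Hom(\SL,\SL)\to\Ext^1(\SL,\SL)$ and check that its push-forward is the family of Hitchin pairs over $\Spec(\CC[\epsilon]/(\epsilon^2))$ representing the corresponding tangent vector of $\SM$; this is Lemma \ref{commtan}, and the cotangent statement is Lemma \ref{commcotan}. Relatedly, the relative duality for $p$ with $\omega_p=p^*N^\vee$ that you propose to invoke is not actually needed: once the tangent-space identification is shown to equal $d\Phi$, the cotangent identification is obtained simply by dualizing that lemma, so only Serre duality on $\overline{S}$ and on $X$ (already built into the definitions of the two cotangent spaces) enters the argument.
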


When $N\,=\, K_X$, related results can be found in \cite{HK}, \cite{HH}, \cite{BM}; there the symplectic
form on a moduli space of Higgs bundles is compared with the symplectic form on the Hilbert scheme of
zero dimensional subschemes of fixed length of the total space of $K_X$ (Mukai had shown that this
Hilbert scheme has a symplectic structure).

\section{Hitchin pairs and spectral data}

Let $X$ be an irreducible smooth complex projective curve of genus $g$, and let $N$ be a fixed
algebraic line bundle on $X$. A Hitchin pair $(E,\,\theta)$ is an algebraic vector bundle $E$ on $X$
together with a morphism $\theta\,:\,E\,\too \,E\otimes N$ of $\SO_X$-modules \cite{Hi1,Ni}. 

A Hitchin pair $(E,\,\theta)$ is \emph{stable} (respectively,
\emph{semistable}) if
$$
\frac{\deg F}{\rk F} \,< \,\frac{\deg E}{\rk E} \ \ \ \text{(respectively,}\ \frac{\deg F}{\rk F} \,
\leq\,\frac{\deg E}{\rk E}\text{)}
$$
for all subbundles $0\, \not=\, F\,\subsetneq\, E$ for which $\theta(F)\, \subset\, F\otimes N$.

Let $\overline\SM$ denote the moduli space of all S-equivalence classes of semistable Hitchin pairs
(constructed in \cite{Ni}; see \cite[Definition 4.2]{Ni} for definition of S-equivalence) of fixed rank $r$ and degree $\delta$. Let 
$$
\SM\,\subset\, \overline\SM
$$
be the open subset of stable Hitchin pairs. We remark that the S-equivalence class of a stable Hitchin pair is the same thing as its isomorphism class, so this moduli space parametrizes isomorphism classes of objects.

\begin{lemma}
\label{smoothness}
The moduli space $\SM$ of stable Hitchin pairs 
$(E,\theta: E\to E\otimes N)$, with $\deg(E)=\delta$, $\rk(E)=r$,
and $N = K_X(D)$ for an effective divisor $D\geq 0$, is smooth.
\end{lemma}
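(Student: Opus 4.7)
The plan is to control the local structure of $\SM$ at a stable point $(E,\,\theta)$ through the hypercohomology of its deformation complex. Infinitesimal deformations of $(E,\,\theta)$ are classified by $\HH^1(X,\,C^\bullet)$, where
$$
C^\bullet \,:\, \SEnd(E) \xrightarrow{\,[\,\cdot\,,\,\theta]\,} \SEnd(E)\otimes N
$$
is the two-term complex placed in degrees $0$ and $1$, and the obstructions to lifting such deformations lie in $\HH^2(X,\,C^\bullet)$. Smoothness of $\SM$ at $(E,\,\theta)$ will therefore follow once we show that the relevant obstruction classes vanish.

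The trace decomposition $\SEnd(E) \cong \SEnd_0(E)\oplus \SO_X$ splits $C^\bullet$ as a direct sum $C^\bullet_0\oplus C^\bullet_{\mathrm{tr}}$, where $C^\bullet_0 = [\,\SEnd_0(E)\to \SEnd_0(E)\otimes N\,]$ is the trace-free part and $C^\bullet_{\mathrm{tr}} = [\,\SO_X \xrightarrow{\,0\,} N\,]$ is the scalar part (the bracket annihilates scalars and preserves the trace-free condition). Hence $\HH^i(X,\,C^\bullet) \cong \HH^i(X,\,C^\bullet_0)\oplus \HH^i(X,\,C^\bullet_{\mathrm{tr}})$.

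For the trace-free summand, Serre duality on the curve $X$ identifies $\HH^2(X,\,C^\bullet_0)^*$ with
$$
\ker\bigl(H^0(X,\,\SEnd_0(E)\otimes N^{-1}\otimes K_X) \too H^0(X,\,\SEnd_0(E)\otimes K_X)\bigr),
$$
the map being induced by the transpose of $[\,\cdot\,,\,\theta]$. Since $N^{-1}\otimes K_X \cong \SO_X(-D)$ embeds naturally into $\SO_X$, every element of this kernel comes from a genuine trace-free endomorphism $\phi\in H^0(X,\,\SEnd_0(E))$ vanishing along $D$ and commuting with $\theta$ as a section of $\SEnd(E)\otimes N$. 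By stability of $(E,\,\theta)$, any such $\phi$ must be a scalar multiple of the identity, and being trace-free it vanishes. Hence $\HH^2(X,\,C^\bullet_0) = 0$, and the fibers of the forgetful morphism $\SM\to \Pic^\delta(X)\times H^0(X,\,N)$, $(E,\,\theta)\mapsto (\det E,\,\operatorname{tr}\theta)$, are smooth at $(E,\,\theta)$.

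For the scalar part, Serre duality gives $\HH^2(X,\,C^\bullet_{\mathrm{tr}}) \cong H^1(X,\,N) \cong H^0(X,\,\SO_X(-D))^*$. When $D>0$ this vanishes, so $\HH^2(X,\,C^\bullet) = 0$ and smoothness is immediate. When $D=0$ (the Higgs case) the group is one-dimensional, and the delicate step is to show that the obstruction class associated to any actual deformation still vanishes: under $\HH^2(X,\,C^\bullet)\to \HH^2(X,\,C^\bullet_{\mathrm{tr}})$ it projects to the obstruction for deforming the image $(\det E,\,\operatorname{tr}\theta)\in \Pic^\delta(X)\times H^0(X,\,K_X)$, which is automatically zero because that target is a smooth scheme. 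Combined with the vanishing of the trace-free obstruction, this proves smoothness of $\SM$ in every case. The main obstacle is precisely this last identification in the Higgs case; elsewhere the argument is a direct application of Serre duality and the defining property of stability.
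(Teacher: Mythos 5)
Your argument is correct, and its essential computation coincides with the paper's: both reduce to showing that the obstruction space of the trace-free part vanishes, via Serre duality, the inclusion $\SO_X(-D)\hookrightarrow \SO_X$, and the fact that a stable Hitchin pair admits only scalar endomorphisms commuting with $\theta$. Where you genuinely diverge is in the treatment of the scalar/determinant direction. The paper handles it geometrically: it shows the morphism $(E,\theta)\mapsto(\det E,\operatorname{tr}\theta)$ is \'etale-locally trivial (by twisting with $r$-th roots of line bundles), so smoothness of $\SM$ follows from smoothness of the fixed-determinant traceless moduli space $\SM_\Delta$ together with smoothness of $\Pic^\delta(X)\times H^0(N)$; this entirely sidesteps the nonvanishing obstruction space $H^1(N)$ in the Higgs case. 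You instead split the deformation complex as $C^\bullet_0\oplus C^\bullet_{\mathrm{tr}}$ and argue that the $C^\bullet_{\mathrm{tr}}$-component of any actual obstruction class is the obstruction class of the induced deformation of $(\det E,\operatorname{tr}\theta)$, which vanishes because that deformation problem is unobstructed. This is a legitimate and arguably more intrinsic route, but the step you assert without justification --- that obstruction classes are functorial under the trace map of complexes, i.e.\ that $\operatorname{tr}\bigl(\mathrm{ob}(E,\theta)\bigr)=\mathrm{ob}(\det E,\operatorname{tr}\theta)$ --- is precisely what the paper's fibration argument is designed to avoid having to prove; it is true and can be checked on \v{C}ech representatives, but in a written version it should be proved or referenced. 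Apart from that, everything checks out, including your correct observation that the obstruction \emph{space} $H^1(K_X)\cong\CC$ does not vanish when $D=0$, so that a pure dimension count on $\HH^2$ cannot suffice there.
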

\begin{proof}
This is well-known, but since we did not find a reference, we include
a sketch of the proof.
Fix a line bundle $\Delta\in \Pic^\delta(X)$ on $X$ of degree $\delta$,
and let $\SM_{\Delta}$ be the moduli space of stable Hitchin pairs
with fixed determinant $\det(E)\cong \Delta$ and ${\rm tr}(\theta)=0$.
Consider the determinant morphism
\begin{equation}
  \label{eq:det}
\det:\SM \longrightarrow \Pic^\delta(X) \times H^0(N)
\end{equation}
sending $(E,\theta)$ to $(\det(E),{\rm tr}(\theta))$.
This morphism is surjective, and the target is
smooth, so to prove smoothness of $\SM$ it is enough to show that the
morphism is flat and the fibers are smooth.
Note that the fiber over
a point $(L,\alpha)$ is isomorphic to the moduli space $\SM_{\Delta}$. Indeed, 
let $\eta\in J(X)$ such that $\eta^r=L\otimes \Delta^{-1}$, 
and send $(E,\theta)\in \SM_{\Delta}$ to $(E\otimes \eta,
\theta + \frac{1}{r}\alpha)$. We can choose a root $\eta$
in an \'etale neighbourhood of $\Delta$, so the determinant
morphism is \'etale locally trivial, and in particular it is
flat. Therefore, it only remains to prove that $\SM_\Delta$ is
smooth. 

The deformation theory of the moduli space $\SM$ of Hitchin pairs of fixed
degree is governed by the complex (in degrees 0 and 1)
$$
C^{\bullet}_{\SEnd}:\SEnd(E) \stackrel{[\cdot,\theta]}\too \SEnd(E)\otimes N
$$
This is proved in \cite{BR}. In particular, 
the infinitesimal deformations are given by $\HH^1(C^{\bullet}_{\SEnd})$,
and $\HH^2(C^{\bullet}_{\SEnd})$ is the obstruction space. Furthermore,
the exact sequence of complexes
$$
0 \too \SEnd(E)\otimes N[-1] \too C^{\bullet}_{\SEnd} \too \SEnd(E) \too 0
$$
($[-1]$ means that the sheaf is placed in degree 1) 
gives an exact sequence
\begin{equation}
  \label{eq:h0end}
0  \too \HH^0(C^{\bullet}_{\SEnd}) \too H^0(\SEnd(E))
\stackrel{[\cdot,\theta]}\too 
H^0(\SEnd(E)\otimes N)
\end{equation}
and then $\HH^0(C^{\bullet}_{\SEnd})$ represents the endomorphisms of
$(E,\theta)$ in the category of Hitchin pairs, i.e., endomorphisms
of $E$ which commute with $\theta$.

On the other hand, the deformation theory of the moduli space
$\SM_\Delta$ of stable Hitchin pairs of fixed determinant is given
by the complex
$$
C^{\bullet}_{ad}:ad(E) \stackrel{[\cdot,\theta]}\too ad(E)\otimes N
$$
This can be checked by a straightforward modification of the proofs 
in \cite{BR}, requiring at
each step that the deformation of $E$ does not change its determinant,
and that the deformation of $\theta$ preserves the vanishing of its
trace.

It is well-known that if $E$ is a stable vector bundle, then its only
endomorphisms are given by scalar multiplication
(cf.~\cite[Proposition 1.2.7 and Corollary 1.2.8]{HL}).
A straightforward modification of the usual argument for vector bundles
shows that this is also true in the category of Hitchin pairs, i.e., 
if $(E,\theta)$ is stable as a Hitchin pair, and $\varphi:E\to E$
is a homomorphism which commutes with $\theta$, then $\varphi$
is the multiplication by a scalar. Therefore, using \eqref{eq:h0end} we have
$$
\HH^0(C^{\bullet}_{\SEnd})=\CC \cdot \id_E
$$
Using the exact sequence
$$
0  \too \HH^0(C^{\bullet}_{ad}) \too H^0(ad(E))
\stackrel{[\cdot,\theta]}\too 
H^0(ad(E)\otimes N)
$$
we obtain
\begin{equation}
  \label{eq:inf}
  \HH^0(C^{\bullet}_{ad})=0
\end{equation}
because this cohomology is given by the
endomorphisms of $E$ which are traceless and commute with $\theta$.

We claim that $\HH^2(C^{\bullet}_{ad})$ vanishes. Indeed, this group is dual to
\begin{equation}
  \label{eq:comp}
\HH^0(ad(E)\otimes N^{-1}\otimes K_X \too ad(E)\otimes K_X)  
\end{equation}
If $N\cong K_X$, then this is just $\HH^0(C^{\bullet}_{ad})$, which we have just seen
that it is zero \eqref{eq:inf}.
On the other hand, if $N\cong K_X(D)$ with $D>0$ an effective, nonzero divisor, then 
note that the complex which appears in \eqref{eq:comp} 
is just $C^{\bullet}_{ad}\otimes \SO_X(-D)$.
We have
an exact sequence of complexes
$$
0 \too 
C^{\bullet}_{ad}\otimes \SO_X(-D)
\too 
C^{\bullet}_{ad} \too C^{\bullet}_{ad}\otimes \SO_D \too 0
$$
The associated long exact cohomology sequence starts as
$$
0 \too \HH^0(C^{\bullet}_{ad}\otimes \SO_X(-D))
\too \HH^0(C^{\bullet}_{ad})
\too \HH^0(C^{\bullet}_{ad}|_D)
$$
The vanishing of $\HH^0(C^{\bullet}_{ad})$ \eqref{eq:inf} implies the vanishing
of $\HH^0(C^{\bullet}_{ad}\otimes \SO_X(-D))$. So we have proved that
the obstruction space $\HH^2(C^{\bullet}_{ad})$ vanishes.

Summing up, there are no infinitesimal isomorphisms, and the
obstruction space vanishes, so $\SM_\Delta$ is smooth, and then the
smoothness of $\Pic^\delta(X)$ implies the smoothness of $\SM$.
\end{proof}

In the sequel we assume that one of the following four assumptions hold:
\begin{enumerate}
\item $g \,\ge\, 2$;

\item $g \,=\, 1$, $N\, =\, K_X$ and $\gcd(r,\,\delta)\, =\, 1$;

\item $g \,=\, 1$ and $\deg(N \otimes K_X^{-1}) \,>\, 0$;

\item $g \,=\, 0$ and $\deg(N \otimes K_X^{-1}) \,\ge\, 3$.
\end{enumerate}
This assumption implies the existence of stable Hitchin pairs \cite[Remark 3.4]{Ma}, and hence $\SM$ is nonempty.

We will now recall the \emph{spectral construction}, which is a bijective correspondence
between Hitchin
pairs $(E,\,\theta)$ and certain sheaves $\SL$ on the total space
\begin{equation}\label{et}
p\, :\, S\, :=\,\mathbb{V}(N)\, \longrightarrow\, X
\end{equation}
of
the line bundle $N$. 
This construction is in \cite[\S~5]{Hi2} for smooth spectral
curves, \cite[p.~173--174, Proposition 3.6]{BNR} for integral spectral curves, 
and \cite[p.~18, Lemma 6.8]{Si} in general (see also \cite[p.~173--174, Remark 3.7]{BNR} and \cite{Sc}).

Consider the projection $p$ in \eqref{et}. Let $x$ denote the
tautological section of $p^*N$ on $S\,=\,\mathbb{V}(N)$.
For a Hitchin pair $(E,\,E\stackrel{\theta}{\too} E\otimes N)$, we define the
associated sheaf $\SL$ using the following short exact sequence of coherent sheaves on $S$
\begin{equation}\label{eq:resolution}
0 \,\too\, p^*(E\otimes N^\vee) \,\stackrel{h}{\too}\, p^* E \,\too\, \SL \,\too\, 0\, ,
\end{equation}
where $p$ is the projection in \eqref{et} and $h\, :=\,p^*\theta - x$, with $x$
being the tensor product with the above mentioned tautological section $x$; the homomorphism
$E\otimes N^\vee\, \too\, E$ given by $\theta$ is denoted here by $\theta$ also.
Throughout, the dual will be denoted by the superscript ``$\vee$''.
The homomorphism $h$ in \eqref{eq:resolution} is
injective (it is an isomorphism over the generic point of $S$),
so \eqref{eq:resolution} is indeed a short exact sequence of coherent sheaves. The
spectral curve $$Y\,\subset\, S$$
for $(E,\, \theta)$ is the subscheme defined by the characteristic polynomial $$\det(p^* \theta-x)
\,=\,0\, .$$ Equivalently, the spectral curve $Y$ is the subscheme defined by the
$0$--th Fitting ideal sheaf $${\rm Fitt}_0(\SL)\,\subset\,
\SO_S\, .$$ To see this equivalence, note that \eqref{eq:resolution} gives a presentation
of $\SL$, and, by definition, the $0$--th Fitting ideal is the ideal
generated by the codimension zero minors of the morphism $h$ in \eqref{eq:resolution}. Since the
ranks of the source and target of $h$ are equal, we only have one minor which
is the determinant $\det(h)\,=\,\det(p^* \theta-x)$, in other words the characteristic
polynomial of $\theta$. We have the following diagram
\begin{equation}\label{ei}
\xymatrix{
{Y}\ar[r]^{i} \ar[rd]_{\pi}& {S}\ar[d]^{p} \\
& {X}
}
\end{equation}
The spectral curve $Y$ is given as the zero of $\det(h)$, which is a section
of $p^*N^r$, and it is proper over $X$, of degree $r$. 
Note that $\SL$ is pure of dimension 1 because $p_*\SL=E$ is torsion free. 
The Euler characteristic of $\SL$ is $\chi(\SL)=\chi(E)=\deg(E)+r(1-g)$. 
We collect all these properties in the following definition.

\begin{definition}
\label{spectralsheaf}
A spectral sheaf $\SL$ on $S$, with invariants $r$ and $\delta$, 
is a coherent sheaf such that
\begin{enumerate}
\item $\SL$ is pure of dimension 1
\item The subscheme $Y\subset S$ defined by its 0-th Fitting ideal is in the divisor class given by $p^*N^r$
\item The projection $Y\longrightarrow X$ is proper, finite of degree $r$.
\item $\chi(\SL)=\delta +r(1-g)$
\end{enumerate}
\end{definition}

If the spectral curve $Y\subset S$ associated to a spectral sheaf $\SL$ is smooth, then there is a line bundle $L$ on $Y$ such that 
$\SL\,=\,i_* L$, where $i$ is the map in \eqref{ei} (\cite[p.~173--174, Remarks 3.7 and 3.8]{BNR}).

Conversely, let $\SL$ be a spectral sheaf on $S$.
The push-forward 
\begin{equation}\label{di}
E\,=\,p_*\SL
\end{equation}
is then a torsion free coherent sheaf, of rank $r$ and degree $\delta$. 
Define the Hitchin
pair $(E,\, \theta)$ 
by setting
$\theta\,=\,p_*x$, where $p$ is the projection in \eqref{ei}. More explicitly,
consider the multiplication by the tautological section $x$ of $p^* N$
$$
\SL\,\stackrel{x}{\too}\, \SL \otimes p^* N
$$
and then $\theta\,=\,p_* x$ is defined as the push-forward of this
homomorphism, using the projection formula
$$
\theta\,:\,E \,=\, p_*\SL\,\too \, p_*(\SL\otimes p^* N)\,=\, (p_*\SL)\otimes N
\,=\,E\otimes N
$$
(see \eqref{di}).
The above reversible construction gives a correspondence between the 
Hitchin pairs $(E,\,\theta)$ 
and the spectral sheaves $\SL$ on $S$

Note that the push-forward $p_*\SL$ is coherent if and only if the projection $Y \longrightarrow X$ is proper, and this holds if and only if the closure of $Y$ in $\overline{S}\,=\,\mathbb{P}(N\oplus \SO_X)$ lies in $S$ (\cite[p.~18, Lemma 6.8]{Si}).

We shall now recall the definition of stability given by Simpson  (\cite{Si})
for a spectral sheaf $\SL$.
Let $$P(\SL,m)\,=\,\chi(\SL\otimes p^*\SO_X(m))$$
be its Hilbert polynomial. Its degree $d$ is equal to the dimension of the
support of $\SL$, and the leading coefficient is 
$a m^d/d!$, where $a\,=\,a(\SL)$ is an integer which we call the rank of
$\SL$. We say that $\SL$ is \emph{stable} (respectively,
\emph{semistable}) if for all proper subsheaves $0\, \not=\, \SL'\,\subset\, \SL$,
$$
a(\SL) P(\SL',m) \,<\, a(\SL') P(\SL,m)\ \ \ \text{ (respectively, }\,\ a(\SL) P(\SL',m)
\,\leq\, a(\SL')P(\SL,m)\text{)}
$$
It is easy to see that if $\SL$ is semistable, then it has pure
dimension, i.e., the dimension of the support of every nonzero subsheaf is the
same as the dimension of the support of $\SL$. 
Note that, in the spectral construction, the condition that $\SL'$ is
a subsheaf of $\SL$ translates into the condition that the
subbundle 
$$
E'\,:=\,p_*\SL'\,\subset\,E\,:=\,p_*\SL
$$ 
satisfies the condition $\theta(E')\, \subset\, E'\otimes N$. It follows that a Hitchin pair is
stable (respectively, semistable) if and only if the corresponding
spectral sheaf on $S$ is
stable (respectively, semistable) \cite[p.~19, Corollary 6.9]{Si}.

The spectral construction can be carried out for families, so we get an
isomorphism between the moduli functors. Since the stability
conditions coincide, we get an isomorphism between 
the moduli space of stable Hitchin pairs $\SM$
or rank $r$ and degree $\delta$ and the moduli space $\SP$ of stable
spectral sheaves with invariants $r$ and $\delta$
\begin{equation}\label{ep}
\Phi\,:\,\SM \,\too\, \SP\, .
\end{equation}
In particular, both moduli spaces, $\SM$ and $\SP$, are smooth 
(Lemma \ref{smoothness}). 

We are interested in calculating the differential of the
isomorphism $\Phi$ in \eqref{ep}. More generally, we will give an isomorphism between the
infinitesimal deformation space of a Hitchin pair $(E,\,\theta)$ 
and the infinitesimal deformation space of the 
corresponding sheaf $\SL$ on $S$.

\section{Poisson structure on the moduli spaces}

The infinitesimal deformation space of a spectral sheaf (Definition \ref{spectralsheaf}) $\SL$ is given by
\begin{equation}\label{tanm}
\Ext^1(\SL,\,\SL)
\end{equation}
\cite[p.~425, (3.1)]{Bo2}. To calculate the dual of this vector space we will use Serre
duality. Recall that $\SL$ is a coherent sheaf on the surface $S\,=\,\mathbb{V}(N)$ in
\eqref{et} which is not projective. However, it has a projective compactification 
$$
j\,:\,S \,\too\, \overline{S}\,=\,\mathbb{P}(N\oplus \SO_X) \, ;
$$
furthermore, the direct image $j_*\SL$ is a coherent sheaf on $\overline{S}$, because the 
closure, in $\overline{S}$, of the support of $\SL$ does not meet 
the boundary $\overline{S}\setminus S$ (this argument is in 
\cite[p.~18, Lemma 6.8]{Si}). 
We have
\begin{equation}\label{dh1}
\Ext^1(\SL,\,\SL)^\vee\,=\,\Ext^1_{\SO_{\overline{S}}}(j_*\SL,\,j_*\SL)^\vee
\,\cong\, \Ext^1_{\SO_{\overline{S}}}(j_*\SL,\,j_*\SL\otimes K_{\overline{S}})
\,=\,\Ext^1(\SL,\,\SL\otimes K_S)\, ,
\end{equation}
where the second isomorphism is Serre duality on the projective
surface $\overline{S}$. 

Let $\SP$ be the moduli space of stable spectral sheaves on $S$ (cf. \eqref{ep}). The tangent space 
$T_{[\SL]}\SP$ at the point corresponding to $\SL$ is canonically identified with $\Ext^1(\SL,\,\SL)$, and the 
cotangent space $T^\vee_{[\SL]}\SP$ is canonically identified with $\Ext^1(\SL,\,\SL\otimes K_S)$ (see
\eqref{dh1}).

Henceforth assume that the line bundle $N\otimes K^{-1}_X$ on $X$ admits a nonzero
section. We fix a nonzero section
\begin{equation}\label{es0}
\sigma_0\, \in \, H^0(X,\, N\otimes K^{-1}_X)\setminus \{0\}\, .
\end{equation}
Since the anticanonical line bundle $K_S^\vee$ of the surface $S$ in
\eqref{et} is identified with $p^* (N\otimes K^{-1}_X)$, where $p$ is
the projection in \eqref{et}, we have
\begin{equation}\label{ess}
s\, :=\, p^* \sigma_0\, \in\, H^0(S,\, K_S^\vee)\, ,
\end{equation}
where $\sigma_0$ is the section in \eqref{es0}. This $s$ produces 
an algebraic Poisson structure on $\SP$
\begin{equation}\label{fp}
B\,:\,T^\vee \SP \,\too\, T\SP
\end{equation}
\cite{Bo2}, \cite{Ty}. We will recall below the fiberwise construction of $B$.

Tensoring with the section $s$ produces a homomorphism $\SL\otimes K_S\, \longrightarrow\,
\SL$. Consequently, we obtain a homomorphism
\begin{equation}\label{poissonsheaf}
T^\vee_{[\SL]} \SP\,\cong\, \Ext^1(\SL,\,\SL\otimes K_S)\,\too\, \Ext^1(\SL,\,\SL)\,\cong\, 
T_{[\SL]}\SP\, .
\end{equation}
The isomorphism $\Ext^1(\SL,\,\SL)\,\cong\, T_{[\SL]}\SP$ is a consequence of the fact that
the infinitesimal deformations of $\SL$ are parametrized by $\Ext^1(\SL,\,\SL)$, while the
isomorphism $T^\vee_{[\SL]} \SP\,\cong\, \Ext^1(\SL,\,\SL\otimes K_S)$ follows from \eqref{dh1}.
Let $B([\SL])$ be the restriction to the fiber over the point $[\SL]$ of
the bundle map $B$ in \eqref{fp}.
This homomorphism $B([\SL])$ coincides with the
one in \eqref{poissonsheaf} (see \cite[p.~428, Formula (4.4)]{Bo2}).

On the other hand, the infinitesimal deformation space of a
Hitchin pair $(E,\, \theta)$ is given by the first hypercohomology of a complex
\begin{equation}\label{hd}
\HH^1( 
E^\vee \otimes E \,\stackrel{[\cdot,\,\theta]}{\too}\, E^\vee \otimes E \otimes N )
\end{equation}
(see \cite[p.~399, Proposition 3.1.2]{Bo1}, \cite[p.~271, Proposition 7.1]{Ma}, \cite[p.~220, 
Theorem 2.3]{BR}). The dual vector space
$$
\HH^1(
E^\vee \otimes E \,\stackrel{[\cdot,\,\theta]}{\too}\, E^\vee \otimes E \otimes N )^\vee
$$
is calculated using Serre duality for hypercohomologies. Denote
by $A^\bullet$ the complex 
$$
A^\bullet\,:\, \quad E^\vee \otimes E \,\stackrel{[\cdot,\,\theta]}{\too}\, E^\vee \otimes E \otimes N 
$$
concentrated in degrees $0$ and $1$. 
Serre duality gives an isomorphism
$$
\HH^1 (A^\bullet)^\vee \stackrel{\cong}{\too} \HH^{-1}(A^\bullet{}^\vee[1] \otimes K_X) = 
\HH^1(A^\bullet{}^\vee[-1]\otimes K_X)
$$
\cite[p.~67, Theorem 3.12]{Hu}.
The dual complex 
$$
A^\bullet{}^\vee\,: \quad E \otimes E^\vee \otimes N^\vee \,\stackrel{[\cdot,\,\theta]^{t}}{\too}
\, E \otimes E^\vee 
$$
is concentrated in degrees $-1$ and $0$.
We identify $E \otimes E^\vee$ and $E^\vee \otimes E$ by switching the
factors; then the morphism $[\cdot,\,\theta]^{t}$ becomes
$[\theta,\,\cdot]$, so
\begin{equation}\label{es}
A^\bullet{}^\vee\,: \quad E^\vee \otimes E \otimes N^\vee \,\stackrel{[\theta,\,\cdot]}{\too}
\, E^\vee \otimes E\, .
\end{equation}
Finally we shift the complex in \eqref{es} by $[-1]$. Recall that, when we shift a complex by
one unit, we multiply by $-1$ all the differentials 
\cite[p.~28, Definition 2.4]{Hu}, 
so we get the complex, concentrated in degrees $0$ and $1$
\begin{equation}\label{es2}
A^\bullet{}^\vee[-1]\otimes K_X\,: \quad E^\vee \otimes E \otimes N^\vee\otimes K_X
\,\stackrel{[\cdot,\,\theta]}{\too} \,E^\vee \otimes E\otimes K_X\, .
\end{equation}

\begin{remark}
In \cite[p.~402, Proposition 3.1.10]{Bo1} the morphism is actually
$[\theta,\,\cdot]$ instead of the homomorphism $[\cdot,\, \theta]$ in \eqref{hd}. Of course
there is an isomorphism between the two complexes (multiplication by $-1$
in one term and $+1$ in the other). 
\end{remark}

As in \eqref{ep}, let $\SM$ be a moduli space of stable Hitchin pairs such that the rank and degree
of the vector bundle underlying a Hitchin pair are $r$ and $\delta$ respectively.
Using the section $\sigma_0$ in \eqref{es0} an algebraic Poisson structure
\begin{equation}\label{sp}
B^H\, :\, T^\vee \SM \,\too\, T\SM
\end{equation}
on $\SM$ is constructed \cite[\S~4.3]{Bo1}, \cite[p.~278, Corollary 7.15]{Ma}.
We recall below a fiberwise construction of the homomorphism $B^H$ in \eqref{sp}.

Tensoring with
the section $\sigma_0$ in \eqref{es0} induces homomorphisms
$N^\vee \otimes K_X\, \longrightarrow\, {\SO}_X$ and $K_X\, \longrightarrow\, N$.
Using these we obtain a homomorphism of complexes
\begin{equation}\label{eq:morcom}
\begin{matrix}
{E^\vee \otimes E \otimes N^\vee \otimes K_X} & \stackrel{[\cdot,\,\theta]}{\longrightarrow}
& {E^\vee \otimes E \otimes K_X}\\
\sigma_0 \Big\downarrow\,\,\,\,\,\,\, && \,\,\,\,\,\,\,\Big\downarrow \sigma_0 \\ 
{E^\vee \otimes E} & \stackrel{[\cdot,\,\theta]}{\longrightarrow} &
{E^\vee \otimes E\otimes N}.
\end{matrix}
\end{equation}
The morphism of complexes in \eqref{eq:morcom} gives a morphism of cohomologies
\begin{align}
\HH^1( 
E^\vee \otimes E \otimes N^\vee \otimes K_X & \,\stackrel{[\cdot,\,\theta]
}{\too}\, E^\vee \otimes E \otimes K_X ) \nonumber \\
\too\,\HH^1( 
E^\vee \otimes E & \stackrel{[\cdot,\,\theta]}\too E^\vee \otimes E
\otimes N)\, .
\label{poissonhiggs}
\end{align}
We have $T_{[(E,\theta)]}\SM\,=\, \HH^1(
E^\vee \otimes E \stackrel{[\cdot,\,\theta]}\too E^\vee \otimes E\otimes N)$, because the
infinitesimal deformations of $(E,\, \theta)$ are parametrized by
$\HH^1(E^\vee \otimes E \stackrel{[\cdot,\,\theta]}\too E^\vee \otimes E\otimes N)$ (see \eqref{hd});
this and \eqref{es2} together imply that
\begin{equation}\label{zl}
T^\vee_{[(E,\theta)]}\SM\,=\, \HH^1( 
E^\vee \otimes E \otimes N^\vee \otimes K_X \,\stackrel{[\cdot,\,\theta]
}{\too}\, E^\vee \otimes E \otimes K_X)\, .
\end{equation}
Using \eqref{zl} and \eqref{hd}, the homomorphism in \eqref{poissonhiggs} becomes a homomorphism
$$
T^\vee_{[(E,\theta)]} \SM \,\too\, T_{[(E,\theta)]}\SM\, .
$$
This homomorphism coincides with $B^H([(E,\theta)])$ in \eqref{sp}
\cite[\S~4.3]{Bo1}, \cite[p.~278, Corollary 7.15]{Ma}.

\begin{remark}
If we compare \eqref{eq:morcom} with the commutative diagram in
\cite[Remark 1.3.3]{Bo1}, we see that the sign is changed in the
vertical left arrow and in one of the horizontal arrows
(there is $[\theta,\,\cdot]$ in \cite[Remark 1.3.3]{Bo1} as
opposed to $[\cdot,\,\theta]$ here).
\end{remark}

\section{Comparison of Poisson structures}

We shall compare the Poisson structures $B$ and $B^H$, constructed in \eqref{ep} and
\eqref{sp} respectively, using the isomorphism $\Phi$ in \eqref{ep}.

We first
recall that the global Ext can be calculated using locally free
resolutions and hypercohomology.

\begin{lemma}[{\cite[Corollary 2 to Theorem 4.2.1]{Gr}}]\label{lem:isom}
Let $V$ and $W$ be coherent sheaves on a scheme $T$. Let
$$
\cdots\,\too\, L^{-3} \,\too\, L^{-2} \,\too\, L^{-1} \,\too\, L^{0} \,\too\, V \,\too\, 0
$$
be a resolution of $V$ by finitely generated locally free sheaves. Then there is an
isomorphism
$$
\Ext^i(V,\,W) \,\cong\, \HH^i(\SHom(L^\bullet,\,W))
$$
which is functorial on $W$.
\end{lemma}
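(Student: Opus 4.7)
The plan is to identify both sides of the claimed isomorphism with the cohomology of the total complex of a single double complex. Concretely, I would choose an injective resolution $W\,\too\, I^\bullet$ of $W$ by $\SO_T$-modules and form
$$
C^{p,q} \,:=\, \Hom_{\SO_T}\bigl(L^{-p},\, I^q\bigr)\, ,
$$
a first-quadrant double complex with horizontal differential coming from $L^\bullet$ and vertical differential from $I^\bullet$. Both spectral sequences of $C^{\bullet,\bullet}$ converge to the cohomology of the associated total complex, and the strategy is to identify one limit with $\Ext^i(V,W)$ and the other with $\HH^i\bigl(\SHom(L^\bullet,\, W)\bigr)$.

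Running the spectral sequence in which horizontal cohomology is taken first, injectivity of each $I^q$ makes $\Hom_{\SO_T}(\,\cdot\,,\, I^q)$ exact; applied to the exact resolution $\cdots \too L^{-1} \too L^0 \too V \too 0$ it vanishes outside column $p=0$, where it yields $\Hom(V,\, I^q)$. The subsequent cohomology in the $q$-direction is $\Ext^q(V,\,W)$, so this spectral sequence degenerates at $E_2$ and the total cohomology is identified with $\Ext^i(V,\,W)$. Running the other spectral sequence, I would use finite generation and local freeness of each $L^{-p}$ to see that $\SHom(L^{-p},\, I^q)$ is locally a finite direct sum of copies of $I^q$, hence flasque, and that the local Ext sheaves $\mathcal{E}xt^q(L^{-p},\, W)$ vanish for $q>0$. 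This implies that $\Gamma\bigl(T,\,\SHom(L^{-p},\, I^\bullet)\bigr)$ computes $H^q\bigl(T,\, \SHom(L^{-p},\, W)\bigr)$, so the $E_1$ page coincides with the standard hypercohomology spectral sequence
$$
E_1^{p,q} \,=\, H^q\bigl(T,\, \SHom(L^{-p},\, W)\bigr) \,\Longrightarrow\, \HH^{p+q}\bigl(\SHom(L^\bullet,\, W)\bigr)\, .
$$
Equating the two limits of the total cohomology of $C^{\bullet,\bullet}$ yields the desired isomorphism.

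The one step that needs real care is the acyclicity argument for $\SHom(L^{-p},\, I^q)$, which genuinely uses the finite generation of the $L^{-p}$ (so that local triviality produces a \emph{finite} direct sum of injective sheaves, which remains flasque). Functoriality in $W$ is automatic: a morphism $W\,\too\, W'$ extends to a map of injective resolutions and induces a morphism of double complexes compatible with both filtrations, hence with the two identifications of the total cohomology with $\Ext^i(V,\,-)$ and $\HH^i\bigl(\SHom(L^\bullet,\,-)\bigr)$.
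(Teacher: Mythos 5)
Your argument is correct, but note that the paper does not actually prove this lemma: it is quoted directly from Grothendieck's Tohoku paper (Corollary 2 to Theorem 4.2.1 of [Gr]), so there is no in-paper proof to match. What you have written is essentially the standard (and essentially Grothendieck's) proof: form the first-quadrant double complex $C^{p,q}=\Hom(L^{-p},I^q)$ from an injective resolution $W\too I^\bullet$ and compare its two spectral sequences, using injectivity of $I^q$ to collapse one of them onto $\Ext^q(V,W)$ and finite local freeness of $L^{-p}$ to identify the other with the hypercohomology spectral sequence of $\SHom(L^\bullet,W)$. Two small points of polish. First, the step ``the $E_1$ pages coincide, hence the abutments do'' is slightly loose as stated; the clean way to finish is to observe that the sheaf-level double complex $\SHom(L^{-p},I^q)$ has total complex quasi-isomorphic to $\SHom(L^\bullet,W)$ (by the vanishing of $\mathcal{E}xt^q(L^{-p},W)$ for $q>0$) with $\Gamma$-acyclic terms, so its global sections compute $\HH^i(\SHom(L^\bullet,W))$ outright, and these global sections are exactly your total complex of $C^{\bullet,\bullet}$. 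Second, your flasqueness argument works (flasqueness is a local property, by a Zorn's lemma gluing argument), but you can avoid it entirely: for $L$ finite locally free and $I$ injective, $\Hom(-,\SHom(L,I))\cong\Hom(-\otimes L,I)$ is exact, so $\SHom(L^{-p},I^q)$ is itself injective, hence $\Gamma$-acyclic. The functoriality claim is handled correctly via maps of injective resolutions, which is what the application in Proposition \ref{prop:com} actually needs.
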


\begin{proposition}\label{prop:com}
Let $W$ be a coherent sheaf on the surface $S$ in \eqref{et}. Let $x\,:\,W\,\too\, W\otimes p^*N$
be multiplication by the tautological
section of $p^* N$ on $S$, where $p$ is the projection in \eqref{et}. Then
for any $(E,\, \theta)\, \in\, \SM$ (see \eqref{ep}) there is an isomorphism
$$
\varphi_W\,\,\,:\,\,\,
\Ext^1(\SL,\,W) \,\stackrel{\cong}{\too}\, 
\HH^1(E^\vee \otimes p_*W \,\stackrel{f_W}{\too}\, E^\vee 
\otimes N\otimes p_*W)\, ,
$$
where $\SL\, =\, \Phi((E,\, \theta))$ (see \eqref{ep}),
$f_W\,=\,\theta^\vee \otimes 1 - 1 \otimes p_*x$, and $f_W$ is seen as
a complex concentrated in degrees $0$ and $1$.
This isomorphism is functorial on $W$, meaning a homomorphism $V\,\too\, W$ of coherent sheaves
on $S$ induces a commutative diagram
$$
\xymatrix{
{\Ext^1(\SL,\,V)} \ar[r] \ar[d] & 
{\HH^1( 
E^\vee \otimes p_*V \stackrel{f_V}\too E^\vee 
\otimes N\otimes p_*V
)} \ar[d] \\
{\Ext^1(\SL,\,W)} \ar[r]& {\HH^1( 
E^\vee \otimes p_*W \stackrel{f_W}\too E^\vee 
\otimes N \otimes p_*W
)}
}
$$
\end{proposition}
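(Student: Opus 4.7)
The plan is to compute $\Ext^1(\SL,W)$ using the two-term locally free resolution of $\SL$ supplied by \eqref{eq:resolution}, and then to push the resulting $\SHom$ complex down to $X$ along $p$. Writing $L^\bullet$ for the complex
$$p^*(E\otimes N^\vee)\,\stackrel{h}{\too}\,p^*E$$
placed in degrees $-1$ and $0$, with $h=p^*\theta-x$, Lemma \ref{lem:isom} yields a functorial (in $W$) isomorphism
$$\Ext^1(\SL,W)\,\cong\,\HH^1(\SHom(L^\bullet,W)).$$
Using the canonical identifications $\SHom(p^*E,W)=p^*E^\vee\otimes W$ and $\SHom(p^*(E\otimes N^\vee),W)=p^*E^\vee\otimes p^*N\otimes W$, the complex $\SHom(L^\bullet,W)$ becomes, in degrees $0$ and $1$,
$$p^*E^\vee\otimes W\,\too\,p^*E^\vee\otimes p^*N\otimes W,$$
with differential the transpose of $h$, which unwinds to $(p^*\theta)^\vee\otimes 1_W - 1_{p^*E^\vee}\otimes x_W$, where $x_W\,:\,W\too W\otimes p^*N$ denotes multiplication by the tautological section.

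Next I push down via $p$. Since $S=\mathbb{V}(N)\to X$ is the total space of a line bundle, the projection $p$ is an affine morphism and therefore $R^ip_*=0$ for $i>0$ on every quasi-coherent sheaf on $S$. Thus $Rp_*$ coincides with $p_*$ at the level of complexes, and by the projection formula
$$p_*(p^*E^\vee\otimes W)\,=\,E^\vee\otimes p_*W,\qquad p_*(p^*E^\vee\otimes p^*N\otimes W)\,=\,E^\vee\otimes N\otimes p_*W.$$
Applying $p_*$ term by term, the $(p^*\theta)^\vee$ piece pushes to $\theta^\vee\otimes 1_{p_*W}$ (projection formula), while $1\otimes x_W$ pushes to $1_{E^\vee}\otimes p_*x$. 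Hence $p_*\SHom(L^\bullet,W)$ is precisely
$$E^\vee\otimes p_*W\,\stackrel{f_W}{\too}\,E^\vee\otimes N\otimes p_*W,\qquad f_W\,=\,\theta^\vee\otimes 1 - 1\otimes p_*x.$$
Combined with the vanishing of higher direct images, this yields $\HH^1(S,\SHom(L^\bullet,W))=\HH^1(X,p_*\SHom(L^\bullet,W))$, and composing with Lemma \ref{lem:isom} defines $\varphi_W$.

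Functoriality of $\varphi_W$ in $W$ is inherited in three steps: Lemma \ref{lem:isom} itself is functorial in $W$, the identifications $\SHom(L^i,W)=L^i{}^\vee\otimes W$ are natural in $W$, and $p_*$ is a functor. Hence any morphism $V\to W$ of coherent sheaves on $S$ induces a chain map of $\SHom$ complexes and, after $p_*$ and $\HH^1$, the claimed commutative square.

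The main technical point is verifying the precise shape of the differential after pushforward, namely that dualizing $h=p^*\theta-x$ and then pushing forward produces exactly $\theta^\vee\otimes 1 - 1\otimes p_*x$; the projection formula handles the first summand cleanly, but the appearance of $p_*x$ from multiplication by the tautological section is a sign- and convention-sensitive bookkeeping and is where care is required.
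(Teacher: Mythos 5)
Your proposal is correct and follows essentially the same route as the paper: the two-term resolution from \eqref{eq:resolution}, Grothendieck's identification of $\Ext^1(\SL,W)$ with $\HH^1(\SHom(L^\bullet,W))$, the observation that $p$ is affine so $Rp_*=p_*$, and the projection formula to identify $p_*\SHom(L^\bullet,W)$ with the complex $f_W$. The functoriality argument (naturality of Lemma \ref{lem:isom}, of the duality identifications, and of $p_*$) also matches the paper's reasoning.
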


\begin{proof}
The short exact sequence in \eqref{eq:resolution} gives a locally free
resolution $L^\bullet$
of $\SL$ (concentrated on degrees $-1$ and $0$)
$$
L^\bullet\,:\quad p^*(E\otimes N^\vee) \,\stackrel{h}{\too}\, p^* E\,.
$$
For any sheaf $W$ on $S$, Lemma \ref{lem:isom} gives an isomorphism functorial for $W$
\begin{equation}\label{eq:iso1}
\Ext^1(\SL,\,W)\,=\,\HH^1(\SHom(L^\bullet,\,W))\, .
\end{equation}
Consider the diagram
$$
\xymatrix{{S} \ar[d]_{f} \ar[r]^{p} & {X} \ar[dl]^{\gamma}\\
{\Spec \CC}
} 
$$
where $f$ and $\gamma$ are the structure morphisms.
Note that the hypercohomology of any complex $D^\bullet$ on $S$
can be calculated as the cohomology of the complex $Rf_*D^\bullet$
(where $Rf_*$ is the derived functor between the derived categories).
We have 
$$
Rf_*\,=\, R(\gamma_* \circ p_*)\,\stackrel{\cong}{\too}\,
R\gamma_* \circ Rp_*\, .
$$
The morphism $p$ is affine, hence $p_*$ is exact and then
$Rp_*\,=\,p_*$. If we apply this observation to the complex
$\SHom(L^\bullet,W)$, we get
\begin{equation}\label{eq:isop}
\HH^1(\SHom(L^\bullet,\,W)) \,\stackrel{\cong}{\too} 
\,\HH^1(p_* \SHom(L^\bullet,\, W))\, .
\end{equation}
In view of \eqref{eq:iso1} and \eqref{eq:isop}, to finish the proof, 
it only remains to show that the complex
$p_*\SHom(L^\bullet,W)$ is equal to the complex $f_W$ in the statement
of the proposition. 

Applying $\SHom(\cdot,\,W)$ to the complex $L^\bullet$ we obtain
a complex concentrated in degrees $0$ and $1$
\begin{equation}\label{c1}
\SHom(L^\bullet,\,W)\,:\quad p^* E^\vee \otimes W \,\stackrel{h'}\too\, p^* E^\vee \otimes p^* N \otimes W\, ,
\end{equation}
where $h'\, :=\, p^* \theta^\vee \otimes 1 - 1\otimes x$. By the
projection formula, $p_*\SHom(L^\bullet,W)$ is
$$
E^\vee \otimes p_*W\,=\, p_*(p^* E^\vee \otimes W)
\,\stackrel{p_*h'}{\too}\, p_*(p^* E^\vee \otimes p^* N \otimes W)\,=\, E^\vee \otimes N\otimes p_*W\, ,
$$
where $p_*h'\,=\,\theta^\vee \otimes 1 - 1\otimes p_*x$, and the proposition follows.
\end{proof}

\begin{remark}
\label{rmk1}
In the proof of Proposition \ref{prop:com}, the main step is the isomorphism in
\eqref{eq:isop}, because it facilitates the passage of objects defined on $S$ to objects
defined on $X$. Note that this isomorphism is induced by the push-forward $p_*$ using $p$.
\end{remark}

Let
\begin{equation}\label{ed}
d\Phi\, :\, T\SM\, \longrightarrow\, \Phi^*T\SP
\end{equation}
be the differential of the isomorphism $\Phi$ in \eqref{ep}.

\begin{lemma}\label{commtan}
Let
$$
\varphi\,\,\,:\,\,\,
\Ext^1(\SL,\,\SL) \,\stackrel{\cong}{\too}\, 
\HH^1(E^\vee \otimes E \,\stackrel{f_{\SL}}{\too}\, E^\vee 
\otimes N\otimes E)
$$
be the homomorphism given by Proposition \ref{prop:com}
for $W\,=\,\SL$ (see \eqref{di}). Then the following diagram
is commutative
$$
\begin{matrix}
\Ext^1(\SL,\,\SL) & \stackrel{\varphi}{\longrightarrow} &
{\HH^1(E^\vee \otimes E \stackrel{[\cdot,\,\theta]}\too E^\vee \otimes E
\otimes N )}\\
\,\,\,\,\Big\uparrow\alpha && \,\,\,\,\Big\downarrow\beta\\
T_{[\SL]}\SP& \xleftarrow{d\Phi(\Phi^{-1}([\SL]))} &
T_{[(E,\theta)]}\SM
\end{matrix}
$$
where $\alpha$ and $\beta$ are the infinitesimal deformation 
maps in \eqref{tanm} and \eqref{hd} respectively, and
$d\Phi(\Phi^{-1}([\SL]))$ is the homomorphism in \eqref{ed}
at $\Phi^{-1}([\SL])\, \in\, \SM$.
\end{lemma}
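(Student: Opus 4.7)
The plan is to represent both infinitesimal deformation classes by explicit Čech cocycles and then observe that the push-forward by $p$ used in Proposition \ref{prop:com} carries one cocycle into the other. By construction, $d\Phi$ is the differential of the spectral construction applied in families over $\Spec \CC[\epsilon]/(\epsilon^2)$.

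Choose an affine open cover $\{U_i\}$ of $X$ trivializing $E$. A tangent vector $v\in T_{[(E,\theta)]}\SM$ is represented by a first-order deformation $(\tilde E,\tilde\theta)$ encoded by transition cocycles $g_{ij}=\id+\epsilon a_{ij}$ for $\tilde E$ and local perturbations $\tilde\theta|_{U_i}=\theta|_{U_i}+\epsilon\eta_i$. The gluing condition modulo $\epsilon^2$ reproduces the Čech cocycle condition for the Hitchin complex $A^\bullet$, so $\beta(v)=(\{a_{ij}\},\{\eta_i\})$.

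Next, I apply the spectral construction in families: pulling back the cover via $p$, the resolution \eqref{eq:resolution} deforms to the short exact sequence
\[
0 \too p^*(\tilde E\otimes N^\vee) \stackrel{p^*\tilde\theta - x}{\too} p^* \tilde E \too \tilde\SL \too 0,
\]
and $\tilde\SL$ represents $d\Phi(v)$. The associated Kodaira--Spencer class $\alpha(d\Phi(v))\in \Ext^1(\SL,\SL)\cong \HH^1(\SHom(L^\bullet,\SL))$ is represented, via \eqref{eq:iso1}, by the Čech cocycle $(\{p^*a_{ij}\},\{p^*\eta_i\})$ in the complex \eqref{c1}: the $\{p^*a_{ij}\}$ part accounts for the deformation of the two terms of $L^\bullet$, while $\{p^*\eta_i\}$ accounts for the deformation of its differential $h=p^*\theta-x$, the two being coupled by the same Hitchin compatibility relation.

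Finally, I apply $p_*$: since $p$ is affine, $p_*$ is exact and commutes with $\HH^1$, so by the projection formula (as in the proof of Proposition \ref{prop:com}) the push-forward of the cocycle above represents $\varphi(\alpha(d\Phi(v)))$ in $\HH^1(E^\vee\otimes E\stackrel{f_\SL}{\too}E^\vee\otimes N\otimes E)$. Using $p_*\SL=E$ and the fact that $p_*x$ restricted to $E$ is $\theta$, one checks that $f_\SL=\theta^\vee\otimes 1-1\otimes p_*x$ coincides with $[\cdot,\theta]$ under the canonical isomorphism $E^\vee\otimes E\cong\SEnd(E)$. The pushed-forward cocycle is therefore $(\{a_{ij}\},\{\eta_i\})=\beta(v)$, giving the required commutativity. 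The main obstacle is the bookkeeping step identifying the Kodaira--Spencer cocycle of the deformed resolution with $(\{p^*a_{ij}\},\{p^*\eta_i\})$, together with the sign check identifying $f_\SL$ with $[\cdot,\theta]$; once these are verified, Proposition \ref{prop:com} supplies the identification of complexes and no further geometric input is needed.
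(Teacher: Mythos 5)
Your argument is correct and rests on the same underlying mechanism as the paper's proof --- namely that both $\varphi$ (through the isomorphism \eqref{eq:isop}) and the spectral correspondence $\Phi$ are implemented by the push-forward $p_*$ --- but you traverse the square in the opposite direction and at a more explicit, cocycle-theoretic level. The paper starts from a deformation $\SL_\epsilon\in T_{[\SL]}\SP$, applies the exact functor $p_*$ (exact because $p$ is affine) to the extension $0\too\SL\too\SL_\epsilon\too\SL\too 0$ to obtain a short exact sequence of two-term complexes $0\too K^\bullet\too K^\bullet_\epsilon\too K^\bullet\too 0$, and identifies $K^\bullet_\epsilon$ as the family of Hitchin pairs representing $\beta\circ\varphi\circ\alpha(\SL_\epsilon)$; commutativity then follows because $\Phi^{-1}$ is likewise given by $p_*$. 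You instead start from a tangent vector of $\SM$ written in \v{C}ech form, push it through the spectral construction by deforming the resolution \eqref{eq:resolution}, and then apply $p_*$. Both routes leave essentially the same piece of bookkeeping implicit: in the paper it is the assertion that the class of $K^\bullet_\epsilon$ coincides with $\beta(\varphi(\delta(\id)))$; in yours it is the identification of the extension class of $\tilde\SL$ with the hypercocycle $(\{p^*a_{ij}\},\{p^*\eta_i\})$ --- more precisely, with the composites of these with the quotient $p^*E\too\SL$, and with a sign on the second component arising from comparing $h=p^*\theta-x$ with its deformation under the local splittings. Your version has the advantage of making the cocycle-level matching, and hence the role of the compatibility relation $\eta_i-\eta_j=[a_{ij},\theta]$, completely visible; the paper's version avoids cocycles altogether by exploiting the exactness of $p_*$ and the functoriality of the connecting homomorphism. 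The sign and splitting conventions you flag are genuinely the only delicate points, and they are at the same level of detail that the paper itself elides.
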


\begin{proof}
Consider an infinitesimal deformation $\SL_\epsilon$ of $\SL$, i.e.,
a sheaf on $S \times \Spec(\mathbb{C}[\epsilon]/(\epsilon^2))$
flat over $\Spec(\mathbb{C}[\epsilon]/(\epsilon^2))$,
that fits in a short exact sequence
\begin{equation}\label{infdef}
0 \,\too\, \SL \,\stackrel{\epsilon}{\too}\, \SL_\epsilon \,\stackrel{{\rm mod}\,\epsilon}{\too}\,
\SL \,\too\, 0\, .
\end{equation}
By applying the functor $\Hom(\SL,\, \cdot)$ to this short exact sequence we obtain
a long exact sequence
$$
\cdots\, \too\, \Hom(\SL,\,\SL_\epsilon) \,\too\, \Hom(\SL,\,\SL) \,\stackrel{\delta}{\too}\,
\Ext^1(\SL,\,\SL) \,\too\, \cdots\, ;
$$
the element of $\Ext^1(\SL,\, \SL)$ that corresponds to the infinitesimal
deformation $\SL_\epsilon$ is $\delta(\id)$, the image of the identity
$\id \,\in\, \Hom(\SL,\, \SL)$ by the connecting homomorphism $\delta$.

If we set $W\,=\,\SL$ in Proposition \ref{prop:com}, we have
$p_*W\,=\,E$ and $p_*x\,=\,\theta$; then 
$f_W\,=\,[\cdot,\,\theta]$, and Proposition \ref{prop:com} produces
the isomorphism $\varphi$. 

The element
\begin{equation}\label{tv}
\beta\circ\varphi\circ\alpha(\SL_\epsilon)\, \in\,
T_{[(E,\theta)]}\SM\, ,
\end{equation}
where $\SL_\epsilon\, \in\, T_{[\SL]}\SP$ is the element in
\eqref{infdef}, has the following description. 

Consider the short exact sequence \eqref{infdef}. 
Applying $p_*$ to it, we obtain an exact sequence of complexes
\begin{equation}\label{u1}
0 \,\too\, K^\bullet \,\stackrel{\epsilon}{\too}\, K^\bullet_\epsilon
\,\stackrel{{\rm mod}\,\epsilon}{\too}\, K^\bullet \,\too\, 0\, ,
\end{equation}
where $K^\bullet$ denotes the complex
$0 \,\too\, E \,\stackrel{\theta}{\too}\, E \otimes N \,\too\, 0$. Since
$K^\bullet_\epsilon$ in \eqref{u1} is a family of Hitchin pairs on $X$ parametrized by
$\Spec(\mathbb{C}[\epsilon]/(\epsilon^2))$, it corresponds to an element of
$T_{[(E,\theta)]}\SM$. This element of $T_{[(E,\theta)]}\SM$
coincides with the one in \eqref{tv}. Now from the construction of the map
$\Phi$ it follows that
$$
d\Phi(\Phi^{-1}([\SL]))(\beta\circ\varphi\circ\alpha(\SL_\epsilon))\, \in\,
T_{[\SL]}\SP
$$
coincides with $\SL_\epsilon\,\in\, T_{[\SL]}\SP$.
\end{proof}

We remark that the key to the commutativity of the diagram in the statement of
Lemma \ref{commtan} is that both $\varphi$ and $\Phi^{-1}$ are induced by the
push-forward $p_*$.

\begin{lemma}
\label{commcotan}
Consider
$$
\varphi_1\, :=\,(\varphi_{\SL\otimes K_S})^{-1}
\, :\,{\HH^1(E^\vee \otimes E \otimes K_X\otimes N^\vee
\stackrel{[\cdot,\,\theta]}\too E^\vee \otimes E \otimes K_X )}
\,\longrightarrow\,\Ext^1(\SL,\,\SL\otimes K_S),
$$
where $\varphi_{\SL\otimes K_S}$ is the isomorphism in Proposition
\ref{prop:com} for $W\,=\,\SL\otimes K_S$. Let $(d\Phi)^\vee([\SL])$ be dual of the
differential of the isomorphism $\Phi$ at $[\SL]$. Then the following diagram is commutative
$$
\begin{matrix}
\Ext^1(\SL,\,\SL\otimes K_S) & \stackrel{\varphi_1}{\longleftarrow} &
{\HH^1(
E^\vee \otimes E \otimes K_X\otimes N^\vee
\stackrel{[\cdot,\,\theta]}\too
E^\vee \otimes E \otimes K_X )}\\
\,\,\,\,\Big\downarrow\alpha^\vee && \,\,\,\,\Big\uparrow\beta^\vee\\
{T^\vee_{[\SL]}\SP} & \xrightarrow{(d\Phi)^\vee([\SL])} &
T^\vee_{[(E,\theta)]}\SM
\end{matrix}
$$
where $\alpha^\vee$ and $\beta^\vee$ are the natural isomorphisms obtained from
\eqref{dh1} and \eqref{es2} respectively, and $(d\Phi)^\vee$ is the dual of the
homomorphism in \eqref{ed}.
\end{lemma}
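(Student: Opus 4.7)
The plan is to identify the cotangent-space diagram in the statement as the Serre-duality transpose of the tangent-space diagram in Lemma \ref{commtan}, and then to deduce its commutativity from that of Lemma \ref{commtan}. By construction, $\alpha^\vee$ (from \eqref{dh1}) and $\beta^\vee$ (from \eqref{es2}) are precisely the Serre-duality companions of $\alpha$ and $\beta$, while $(d\Phi)^\vee$ is by definition the transpose of $d\Phi$. Hence the heart of the proof is the following claim: $\varphi_1 = \varphi_{\SL \otimes K_S}^{-1}$ is the Serre-duality transpose of $\varphi = \varphi_\SL$, i.e.\
\[
\langle u,\, \varphi_1(y) \rangle_S \,=\, \langle \varphi_\SL(u),\, y \rangle_X
\]
for all $u \in \Ext^1(\SL,\SL)$ and all $y \in \HH^1(E^\vee \otimes E \otimes N^\vee \otimes K_X \to E^\vee \otimes E \otimes K_X)$, where $\langle\cdot,\cdot\rangle_S$ is the Serre pairing underlying \eqref{dh1} and $\langle\cdot,\cdot\rangle_X$ is the hypercohomology Serre-duality pairing underlying \eqref{es2}.

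To prove this identity, I would compute both pairings through the same locally free resolution $L^\bullet$ of $\SL$ from \eqref{eq:resolution} that was used in Proposition \ref{prop:com}. The Serre pairing on Ext groups over $\overline{S}$ is realized, at the level of $L^\bullet$, as a Yoneda/cup-product pairing on hypercohomology of complexes of locally free sheaves on $S$, composed with the trace into $H^2(\overline{S},\, K_{\overline{S}}) \cong \CC$. Since $p$ is affine, the pushforward $p_*$ identifies this with a pairing on the hypercohomology of the corresponding complexes on $X$ (as in \eqref{eq:isop} and Remark \ref{rmk1}); using $K_S = p^*(K_X \otimes N^\vee)$ and the projection formula, this pushed-forward pairing becomes the natural Serre-duality pairing between $\HH^1(A^\bullet)$ and $\HH^1(A^{\bullet,\vee}[-1] \otimes K_X)$ on $X$, which is precisely the pairing used to identify $T^\vee_{[(E,\theta)]}\SM$ with $\HH^1(A^{\bullet,\vee}[-1] \otimes K_X)$ in \eqref{es2}.

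The main obstacle will be the bookkeeping of the various traces and signs: matching the surface trace $\Ext^2(\SL,\, \SL \otimes K_S) \to H^2(\overline{S},\, K_{\overline{S}})$ with its curve-level counterpart requires invoking Grothendieck duality for the affine morphism $p$, while a sign also appears from the shift $[-1]$ in \eqref{es2}, as noted in the remark immediately preceding the lemma. Once the two pairings are matched, the displayed identity holds. The commutativity of the diagram in the statement then follows by pairing both paths around it with an arbitrary tangent vector $w \in T_{[(E,\theta)]}\SM$ and using Lemma \ref{commtan} to replace $\varphi_\SL(\alpha(d\Phi(w)))$ by $\beta^{-1}(w)$, at which point the two sides coincide by the established identity.
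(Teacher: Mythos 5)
Your proposal is correct and follows essentially the same route as the paper: both reduce the lemma to the single claim that $\varphi_1$ is the Serre-duality transpose of $\varphi$ (the paper's equation \eqref{zl2}), after which the cotangent diagram is the term-by-term dual of the diagram in Lemma \ref{commtan}. The only difference is that you sketch how to actually verify that transpose identity (via the resolution $L^\bullet$, Grothendieck duality for the affine morphism $p$, and sign bookkeeping), whereas the paper asserts it with minimal justification.
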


\begin{proof}
Consider the homomorphism $\varphi$ in Lemma \ref{commtan}.
For any $$\omega\, \in\, {\HH^1(E^\vee \otimes E \otimes K_X\otimes N^\vee
\stackrel{[\cdot,\,\theta]}\too E^\vee \otimes E \otimes K_X )}$$
and $v\, \in\, \Ext^1(\SL,\,\SL)$, we have
\begin{equation}\label{zl2}
\varphi_1(\omega)(v)\, =\, \omega(\varphi(v))\, \in\, {\mathbb C}\, ;
\end{equation}
recall that
$$
\HH^1(E^\vee \otimes E \otimes K_X\otimes N^\vee
\stackrel{[\cdot,\,\theta]}\too E^\vee \otimes E \otimes K_X )\,=\,
\HH^1(E^\vee \otimes E \,\stackrel{[\cdot,\,\theta]}\too\, E^\vee
\otimes E\otimes N)^\vee
$$
(see \eqref{zl} and \eqref{hd}), and 
$$
\Ext^1(\SL,\,\SL\otimes K_S)\,=\, \Ext^1(\SL,\,\SL)^\vee
$$
(see \eqref{dh1}). From \eqref{zl2} it follows immediately that
$\varphi_1$ coincides with the dual homomorphism $\varphi^\vee$.

Therefore, every homomorphism in the diagram in this lemma is the dual of the
corresponding homomorphism in the diagram in Lemma \ref{commtan}. Hence the
lemma follows from Lemma \ref{commtan}.
\end{proof}

The homomorphism $B$ in \eqref{fp} gives an algebraic section
\begin{equation}\label{b1}
B\, \in\, H^0(\SP,\, \bigwedge\nolimits^2 T\SP)\, ,
\end{equation}
and the homomorphism $B^H$ in \eqref{sp} gives an algebraic section
$$
B^H\, \in\, H^0(\SM,\, \bigwedge\nolimits^2 T\SM)\, .
$$
Consider the homomorphism $d\Phi$ in \eqref{ed}. We note that
\begin{equation}\label{b2}
\Phi_* B^H\, :=\, (\bigwedge\nolimits^2 d\Phi) (B^H)\, \in\, H^0(\SP,\, \bigwedge\nolimits^2 T\SP)
\end{equation}
is a Poisson structure on $\SP$.

\begin{theorem}\label{thm1}
The isomorphism $\Phi$ in \eqref{ep} satisfies the condition $$\Phi_* B^H\,=\, B\, ,$$
where $\Phi_* B^H$ and $B$ are the sections in \eqref{b2} and \eqref{b1} respectively.
\end{theorem}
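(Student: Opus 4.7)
The plan is to verify the identity $\Phi_* B^H = B$ pointwise at each $[\SL]\in\SP$ with preimage $[(E,\theta)] = \Phi^{-1}([\SL])$. Lemma \ref{commtan} and Lemma \ref{commcotan} give the factorizations $d\Phi = \alpha^{-1}\circ\varphi_\SL^{-1}\circ\beta$ and $(d\Phi)^\vee = \beta^\vee\circ\varphi_{\SL\otimes K_S}\circ(\alpha^\vee)^{-1}$, where $\varphi_\SL$ and $\varphi_{\SL\otimes K_S}$ are the isomorphisms supplied by Proposition \ref{prop:com} applied to $W=\SL$ and $W=\SL\otimes K_S$. Combining these with the fiberwise descriptions of $B$ (via \eqref{poissonsheaf}) and $B^H$ (via \eqref{poissonhiggs}), the required identity $B([\SL]) = d\Phi\circ B^H([(E,\theta)])\circ (d\Phi)^\vee$ at the chosen point reduces to the commutativity of the square
\begin{equation*}
\begin{matrix}
\Ext^1(\SL,\,\SL\otimes K_S) & \stackrel{\cdot\, s}{\longrightarrow} & \Ext^1(\SL,\,\SL) \\[3pt]
\varphi_{\SL\otimes K_S}\,\Big\downarrow && \Big\downarrow\,\varphi_\SL \\[3pt]
\HH^1\bigl(\text{the complex }\eqref{es2}\bigr) & \longrightarrow & \HH^1\bigl(\text{the complex }\eqref{hd}\bigr)
\end{matrix}
\end{equation*}
in which the top arrow is multiplication by $s$ and the bottom arrow is the map on hypercohomologies induced by the morphism of complexes \eqref{eq:morcom}.

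By the functoriality statement in Proposition \ref{prop:com}, the morphism $\SL\otimes K_S\xrightarrow{\,\cdot\, s\,}\SL$ automatically produces a commutative square of exactly the above shape, whose bottom row is the hypercohomology map induced by the push-forward morphism of complexes $p_*\SHom(L^\bullet,\,\cdot\, s)$. It therefore suffices to identify this push-forward with the morphism of complexes in \eqref{eq:morcom}.

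This identification is a direct projection-formula calculation. Since $K_S^\vee = p^*(N\otimes K_X^{-1})$, we have $K_S = p^*(K_X\otimes N^{-1})$, and the projection formula gives $p_*(\SL\otimes K_S) \cong E\otimes K_X\otimes N^{-1}$ (while $p_*\SL = E$). Under these isomorphisms the push-forward complex $p_*\SHom(L^\bullet,\,\SL\otimes K_S)$ is precisely the complex \eqref{es2}, and $p_*\SHom(L^\bullet,\,\SL)$ is precisely \eqref{hd}. Because $s = p^*\sigma_0$, the projection formula further yields that the push-forward of multiplication by $s$ is multiplication by $\sigma_0$: in degree $0$ the morphism $E^\vee\otimes E\otimes K_X\otimes N^{-1} \too E^\vee\otimes E$ is $\id_{E^\vee\otimes E}\otimes\sigma_0$ with $\sigma_0\colon K_X\otimes N^{-1}\to\SO_X$, and in degree $1$ the morphism $E^\vee\otimes E\otimes K_X \too E^\vee\otimes E\otimes N$ is $\id_{E^\vee\otimes E}\otimes\sigma_0$ with $\sigma_0\colon K_X\to N$. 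These are exactly the two vertical arrows of \eqref{eq:morcom}, completing the identification.

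The main obstacle in this plan is the bookkeeping of identifications: one must verify that the Serre-duality isomorphisms \eqref{dh1} and \eqref{zl}, the deformation-theoretic isomorphisms $\alpha$ and $\beta$, and the isomorphisms $\varphi_\SL$ and $\varphi_{\SL\otimes K_S}$ from Proposition \ref{prop:com} are all mutually compatible, so that the reduction in the first paragraph is valid. Once this is done, the essential geometric content — that pushing forward multiplication by the pulled-back section $s = p^*\sigma_0$ yields multiplication by $\sigma_0$ — is a single instance of the projection formula, and the argument is essentially a carefully executed diagram chase built on top of Lemmas \ref{commtan}, \ref{commcotan} and Proposition \ref{prop:com}.
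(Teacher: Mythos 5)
Your proof is correct and takes essentially the same route as the paper: both reduce the pointwise identity to the commutativity of the square obtained by applying the functoriality clause of Proposition \ref{prop:com} to the morphism $\SL\otimes K_S\stackrel{s}{\too}\SL$, and both use Lemmas \ref{commtan} and \ref{commcotan} to identify the horizontal arrows of that square with $d\Phi^{-1}$ and $(d\Phi)^\vee$. Your explicit projection-formula check that $p_*$ of multiplication by $s=p^*\sigma_0$ is exactly the morphism of complexes \eqref{eq:morcom} is the same identification the paper's diagram \eqref{cd} uses implicitly, just spelled out in more detail.
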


\begin{proof}
The section $\sigma_0\,\in\, H^0(X,\, N\otimes K^\vee_X)$ gives a section
$s$ of $K_S^\vee\,=\,p^*(N\otimes K^\vee_X)$, and hence it produces a homomorphism
$\SL\otimes K_S\,\too\, \SL$. We apply Proposition \ref{prop:com} to this homomorphism and
get the following commutative diagram:
\begin{equation}\label{cd}
\xymatrix{
{\Ext^1(\SL,\SL\otimes K_S)} \ar[r]^-{\cong}_-{\varphi'} \ar[d] &
{\HH^1( 
E^\vee \otimes E \otimes K_X\otimes N^\vee 
\stackrel{[\cdot,\,\theta]}\too
E^\vee \otimes E \otimes K_X )} \ar[d]
\\
{\Ext^1(\SL,\,\SL)} \ar[r]^-{\cong}_-{\varphi} &
{\HH^1( 
E^\vee \otimes E \,\stackrel{[\cdot,\,\theta]}{\too}\, E^\vee \otimes E
\otimes N )}
}
\end{equation}
where the left vertical homomorphism is the one in \eqref{poissonsheaf} and the right
vertical homomorphism is the one in \eqref{poissonhiggs}. 

Lemma \ref{commcotan} shows that the top horizontal homomorphism $\varphi'$ in \eqref{cd} 
coincides with $(d\Phi)^\vee ([\SL])$. We note that $\varphi'\,=\, (\varphi_1)^{-1}$, where 
$\varphi_1$ is the homomorphism in Lemma \ref{commcotan}. On the other hand, Lemma 
\ref{commtan} shows that the bottom horizontal homomorphism in \eqref{cd} coincides with 
$d\Phi^{-1}([\SL])$. The left vertical homomorphism in \eqref{cd} gives $B([\SL])$, the Poisson 
structure on $\SP$, while the right vertical homomorphism in \eqref{cd} gives 
$B^H([(E,\theta)])\,=\, B^H(\Phi^{-1}([\SL]))$, the Poisson structure on $\SM$. Consequently, 
the theorem follows from the commutativity of the diagram in \eqref{cd}.
\end{proof}

\section*{Acknowledgements}

We thank Oscar Garc{\'\i}a-Prada for helpful discussions on the
smoothness and deformation theory of the moduli space of Hitchin
pairs. We thank that referee for comments which improved the exposition.
The first author is supported by a J. C. Bose fellowship. The third author is supported
by Ministerio de Ciencia e Innovaci\'on of Spain (grants PID2019-108936GB-C21 and
ICMAT Severo Ochoa project CEX2019-000904-S) and CSIC (2019AEP151 and \textit{Ayuda 
extraordinaria a Centros de Excelencia Severo Ochoa} 20205CEX001).

\end{document}